\numberwithin{equation}{section}
\newtheorem{theorem}{Theorem}[section]
\newtheorem{lemma}[theorem]{Lemma}
\newtheorem{corollary}[theorem]{Corollary}
\newtheorem{proposition}[theorem]{Proposition}
\theoremstyle{definition}
\theoremstyle{remark}
\newtheorem{remark}[theorem]{Remark}
\numberwithin{equation}{section}
\newcommand{\Bv}{{\boldsymbol{v}}}
\newcommand{\Bx}{{\boldsymbol{x}}}
\newcommand{\By}{{\boldsymbol{y}}}
\newcommand{\BK}{{\boldsymbol{K}}}
\newcommand{\CC}{{\mathcal C}}
\begin{document}

\title{Global $W^{2,p}$ estimates for elliptic equations in the non-divergence form}

\author{Weifeng Qiu}
\address{Department of Mathematics, City University of Hong Kong,
83 Tat Chee Avenue, Kowloon, Hong Kong, China}
\email{weifeqiu@cityu.edu.hk}

\author{Lan Tang}
\address{School of Mathematics and Statistics, Cental China Normal University,
Wuhan, Hubei 430079, China}
\email{lantang@mail.ccnu.edu.cn}

\begin{abstract}
This paper is devoted to establishing global $W^{2, p}$ estimate for strong solutions to the Dirichlet problem of  uniformly elliptic equations in the non-divergence form where the domain is a Lipschitz polyhedra.
\end{abstract}

\subjclass[2010]{35B65, 35D35, 35J25}

\thanks{Weifeng Qiu is supported by a grant from the Research Grants Council of the Hong Kong Special Administrative Region, China (Project No. CityU 11302219). Lan Tang is  supported by the National Natural Science Foundation of China (No. 11831009).}


\maketitle

\section{Introduction}
Let $\Omega$ be a bounded domain  in $\mathbb{R}^3$ with Lipschitz boundary and  $f\in L^p(\Omega)$ with some $1<p<\infty$. Assume that
\begin{eqnarray}
\label{assump_1}
A \in [C^{0}(\overline{\Omega})]^{3 \times 3}; \ \text{and}\  \lambda_0 I_3 \leq A(\Bx) \leq \lambda_1 I_3, \ \forall \Bx\in \Omega
\end{eqnarray}for two uniform positive constants $0<\lambda_0\leq \lambda_1<\infty$. Here $I_3$ denotes the $3\times3$ identity matrix.

In this work, we mainly consider the Dirichlet problem for the elliptic equations with non-divergence structure:
\begin{equation}
\label{eq1}
\left \{
 \begin{array}{ll}
A: D^2 u=f \ \ & \text{in}\ \Omega\\ \\
u=0 \  &\text{on}\ \partial\Omega
 \end{array}
\right.
 \end{equation}Here for any $\Bx\in\Omega$,  $A(\Bx): D^2 u(\Bx)$ denotes the trace of the matrix $ A(\Bx) D^2 u(\Bx)$.

For global $W^{2, p}$ estimate for strong solutions to (\ref{eq1}), it is one of key problems in the regularity theory for elliptic equations and has attracted much attention from mathematicians since 1950s. It is well known that when $\Omega\in C^{1,1}$,  then for any $1<p<\infty$, (\ref{eq1}) admits a unique strong solution $u\in W^{2, p}\cap W^{1,p}_0$( see chapter 9 of Gilbarg-Trudinger \cite{GT01}) with the estimate
\begin{eqnarray}
\label{estimate1}
\|u\|_{2, p}\leq C \|f\|_p
\end{eqnarray}

As for the boundary with less regularity, there are few results for global regularity. Recently, Li \cite{Li}   obtained $W^{2, p}$ estimate for strong solutions with $C^{1, \alpha}$ boundary for $p$ depending on $\alpha$.

In our work, we get global $W^{2, p}$ estimate  for strong solutions to (\ref{eq1}) with $p$ in possible ranges under weak regularity assumptions for $\partial \Omega$. More precisely,  we have
\begin{theorem}
\label{main_theorem1}
Let $\Omega$ be a Lipschitz polyhedral domain in $\mathbb{R}^{3}$ and  $A$ satisfy $(\ref{assump_1})$. Then the following conclusions hold:

${(1)}$ For any $p\in [\frac{6}{5}, \frac{4}{3})$,  there exists a positive constant $C_{p}$ such that
\begin{align}
\label{main_estimate1}
\Vert u\Vert_{W^{2,p}(\Omega)} \leq C_{p} \left(\Vert A : D^{2}u\Vert_{L^{p}(\Omega)}+ \Vert u\Vert_{L^{p}(\Omega)} \right),
\qquad \forall u \in W^{2,p}(\Omega) \cap W_{0}^{1,p}(\Omega).
\end{align}

${(2)}$ Assume $\Omega$ is also convex, then $(\ref{main_estimate1})$  holds true for any $ p$  with $\frac{6}{5}\leq p \leq  2$.
\end{theorem}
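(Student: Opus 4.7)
The plan is to prove Theorem \ref{main_theorem1} via a two-step reduction: first handle the constant coefficient model problem (equivalently, the Dirichlet Laplacian on a polyhedral domain) by appealing to the known sharp regularity theory near polyhedral corners and edges, and then pass to variable $A$ by a standard freezing argument exploiting the uniform continuity of $A$ on $\overline{\Omega}$.

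For the constant coefficient model, suppose $A\equiv A_{0}$ is a constant symmetric matrix satisfying $\lambda_{0}I_{3}\leq A_{0}\leq \lambda_{1}I_{3}$. The linear substitution $\By=A_{0}^{-1/2}\Bx$ converts $A_{0}:D^{2}u$ into $\Delta_{\By}\tilde{u}$ on a transformed domain $\tilde{\Omega}=A_{0}^{-1/2}\Omega$, which is again a Lipschitz polyhedron and is convex if and only if $\Omega$ is. Thus $(\ref{main_estimate1})$ with $A=A_{0}$ reduces to the corresponding $W^{2,p}$ estimate for the Dirichlet problem $-\Delta \tilde{u}=g$, $\tilde{u}|_{\partial\tilde{\Omega}}=0$. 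The admissible range of $p$ for this problem on a three-dimensional polyhedron is controlled by the singular exponents at edges and vertices: on a general Lipschitz polyhedron, dihedral angles may approach $2\pi$, which forces $p<4/3$; on a convex polyhedron, all dihedral angles are bounded by $\pi$, so Grisvard's $H^{2}$ regularity theorem together with interpolation pushes the admissible range up to $p\leq 2$. The lower bound $p\geq 6/5$ corresponds naturally to the Sobolev dual exponent of the energy space $H^{1}$ in dimension three and enters through the pairing used in the polyhedral estimates.

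To pass to variable $A$, fix a small parameter $\varepsilon>0$ to be chosen later. By the uniform continuity of $A$ on $\overline{\Omega}$ there exists $r>0$ such that $|A(\Bx)-A(\By)|\leq \varepsilon$ whenever $|\Bx-\By|\leq r$. Cover $\overline{\Omega}$ by finitely many balls $B_{r}(\Bx_{k})$, $k=1,\dots,N$, and let $\{\eta_{k}\}$ be a smooth partition of unity subordinate to this cover. For each $k$ rewrite
\begin{equation*}
A(\Bx_{k}):D^{2}(\eta_{k}u)=\eta_{k}\,(A:D^{2}u)+\eta_{k}\,(A(\Bx_{k})-A):D^{2}u+R_{k}(u,\nabla u),
\end{equation*}
where $R_{k}$ is a commutator involving only $u$ and $\nabla u$. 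Applying the constant coefficient estimate with frozen coefficient $A(\Bx_{k})$ to $\eta_{k}u$, summing over $k$, taking $\varepsilon$ small enough to absorb the resulting $\varepsilon\|D^{2}u\|_{L^{p}}$ contribution, and handling the remaining first-order term by the standard interpolation inequality $\|\nabla u\|_{L^{p}}\leq \delta\|D^{2}u\|_{L^{p}}+C_{\delta}\|u\|_{L^{p}}$, delivers $(\ref{main_estimate1})$.

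The hard part will be the constant coefficient step itself, specifically the sharp polyhedral $W^{2,p}$ estimate across the full announced range. A naive ball cover is not compatible with the corner and edge stratification of $\partial\Omega$, so the localization underlying the freezing argument must be adapted to the geometry: near each edge one works in wedge coordinates, near each vertex in a polyhedral cone, and the cutoffs must be chosen so that the localized subproblems remain posed on Lipschitz (or convex) polyhedral domains whose admissible $p$-range is no worse than the global one. I expect the bulk of the technical work to lie in setting up these geometry-adapted localizations and in verifying uniformly across the cover that the exponent ranges $[6/5,4/3)$ in case (1) and $[6/5,2]$ in case (2) are preserved through each step of the reduction.
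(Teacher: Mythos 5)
Your overall skeleton is the same as the paper's: reduce the frozen-coefficient operator $A(\Bx_k):D^2$ to the Laplacian by the linear substitution $\By=A(\Bx_k)^{-1/2}\Bx$, observe that the image domain is again a Lipschitz (resp.\ convex) polyhedron, invoke the known polyhedral $W^{2,p}$ theory for the Dirichlet Laplacian, and then perturb using uniform continuity of $A$. However, there are two genuine gaps. First, the heart of the theorem is precisely the model estimate $\Vert v\Vert_{W^{2,p}}\leq C\Vert\Delta v\Vert_{L^p}$ on a Lipschitz polyhedron for $6/5\leq p<4/3$ and on a convex polyhedron up to $p=2$; the paper takes this from Dauge (Theorem \ref{dauge}), whereas you leave it unproven. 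Your justifications are heuristics, not arguments: the edge-angle count only explains why $p<4/3$ is the best one can hope for, ``Grisvard's $H^2$ plus interpolation'' cannot give the range down to $6/5$ because interpolation needs the nontrivial low-$p$ endpoint, which is exactly the estimate in question, and ``$6/5$ is the dual Sobolev exponent'' is not a proof. Worse, your closing paragraph proposes to establish the model estimate by geometry-adapted localizations to wedges and cones, which amounts to reproving Dauge's theorem and is in any case unnecessary: since $\eta_k u\in W^{2,p}(\Omega)\cap W_0^{1,p}(\Omega)$, you may apply the \emph{global} polyhedral estimate on the affinely transformed copy of $\Omega$ directly to $\eta_k u$, exactly as in Proposition \ref{prop2} of the paper; no local model domains are needed.

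Second, your variable-coefficient step is circular as written. You choose $\varepsilon$, which forces the chart radius $r=r(\varepsilon)$ and hence the number of charts $N=N(\varepsilon)$, and then you ``sum over $k$ and take $\varepsilon$ small enough to absorb $\varepsilon\Vert D^2u\Vert_{L^p}$'': summing $L^p$ norms linearly over the cover costs a factor that grows with $N$, so shrinking $\varepsilon$ does not make the absorbed term small. The fix is standard but must be stated: write the perturbation as $(A(\Bx_k)-A):D^2(\eta_k u)$ and absorb it chart-by-chart into $\Vert\eta_k u\Vert_{W^{2,p}(\Omega)}$ (this only requires $\varepsilon$ to beat the finitely many frozen-coefficient constants), or sum $p$-th powers over a cover whose overlap is bounded independently of $r$; only after $r$ is fixed do you choose the parameter in the interpolation inequality for the first-order commutator terms. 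Note also that the paper organizes this absorption differently, via nested cutoffs, weighted seminorms and a Stein-extension lemma (Proposition \ref{prop1}, inequality (\ref{claim1})) to get interpolation constants uniform over the shrinking boxes; your single global partition of unity, once repaired as above, legitimately avoids that machinery, but as submitted both of your key steps --- the model estimate and the absorption --- are the ones with holes.
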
\

In fact, when $\Omega$ be a convex polyhedral domain in $\mathbb{R}^{3}$, we can obtain a much stronger estimate:
\begin{corollary}
\label{cor1}
Let $\Omega$ be a convex polyhedral domain in $\mathbb{R}^{3}$ and  $A$ satisfy $(\ref{assump_1})$. Then For any $p\in (\frac{3}{2}, 2]$,  there exists a positive constant $C_{p}$ such that
\begin{align}
\label{main_estimate2}
\Vert u\Vert_{W^{2,p}(\Omega)} \leq C_{p}  \Vert A : D^{2}u\Vert_{L^{p}(\Omega)},  \qquad \forall u \in W^{2,p}(\Omega) \cap W_{0}^{1,p}(\Omega).
\end{align}
\end{corollary}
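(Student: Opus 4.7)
The plan is to bootstrap the corollary from Theorem \ref{main_theorem1}(2). That theorem already gives the weaker estimate
\[
\|u\|_{W^{2,p}(\Omega)} \leq C_p \bigl(\|A:D^{2}u\|_{L^{p}(\Omega)} + \|u\|_{L^{p}(\Omega)}\bigr)
\]
for every $p \in [6/5, 2]$ under the convexity hypothesis, so it will suffice to eliminate the zero-order term on the right-hand side. For this I would run a standard compactness-plus-uniqueness argument, which works precisely in the subrange $p > 3/2$.

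I would argue by contradiction. Assuming the desired constant does not exist, one obtains a sequence $\{u_n\} \subset W^{2,p}(\Omega) \cap W_{0}^{1,p}(\Omega)$ with $\|u_n\|_{W^{2,p}(\Omega)} = 1$ and $\|A:D^{2}u_n\|_{L^{p}(\Omega)} \to 0$. Applying Theorem \ref{main_theorem1}(2) to the differences $u_n - u_m$ and invoking the Rellich--Kondrachov compact embedding $W^{2,p}(\Omega) \hookrightarrow L^{p}(\Omega)$, I can extract a subsequence that is Cauchy in $W^{2,p}(\Omega)$ and therefore converges to a limit $u \in W^{2,p}(\Omega) \cap W_{0}^{1,p}(\Omega)$ satisfying $\|u\|_{W^{2,p}(\Omega)} = 1$ and $A:D^{2}u = 0$ almost everywhere in $\Omega$.

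To close the argument it will remain to show $u \equiv 0$, and this is where the hypothesis $p > 3/2 = n/2$ (with $n=3$) becomes essential. By the Sobolev embedding $W^{2,p}(\Omega) \hookrightarrow C(\overline{\Omega})$, valid exactly when $p > 3/2$, the limit is continuous on $\overline{\Omega}$ and vanishes pointwise on $\partial \Omega$. Because $A \in C^{0}(\overline{\Omega})$ is uniformly elliptic and $A:D^{2}u = 0$, the interior Calder\'on--Zygmund theory upgrades $u$ to $W^{2,q}_{\mathrm{loc}}(\Omega)$ for every $q < \infty$; in particular $u \in W^{2,3}_{\mathrm{loc}}(\Omega)$. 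The Alexandrov--Bakelman--Pucci maximum principle, applicable to functions in $C(\overline{\Omega}) \cap W^{2,n}_{\mathrm{loc}}(\Omega)$, then forces $u \equiv 0$ in $\Omega$, contradicting $\|u\|_{W^{2,p}(\Omega)} = 1$.

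The delicate step is the uniqueness argument. The initial regularity $u \in W^{2,p}(\Omega)$ with $p \leq 2 < 3$ is too weak for a direct appeal to ABP, so the interior Calder\'on--Zygmund bootstrap is what bridges the gap, while the Sobolev embedding supplies the pointwise boundary values. This is also the structural reason why the lower threshold in the corollary must be $p > 3/2$ rather than $p \geq 6/5$: at $p = 3/2$ the embedding into $C(\overline{\Omega})$ fails, and the pointwise maximum principle is no longer available to rule out non-trivial limits.
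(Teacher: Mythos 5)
Your proposal is correct and takes essentially the same route as the paper: a contradiction/compactness argument based on the a priori estimate of Theorem \ref{main_theorem1}(2), then the Sobolev embedding $W^{2,p}(\Omega)\hookrightarrow C^{0}(\overline{\Omega})$ for $p>3/2$, the interior $W^{2,q}$ regularity bootstrap to reach $W^{2,3}_{\mathrm{loc}}$, and the Alexandrov maximum principle to force the limit to vanish. The only cosmetic difference is that you normalize $\Vert u_n\Vert_{W^{2,p}(\Omega)}=1$ and get strong $W^{2,p}$ convergence by applying the estimate to differences, whereas the paper normalizes $\Vert v_m\Vert_{L^{p}(\Omega)}=1$ and passes to a weak limit; both variants are standard and equivalent here.
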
\

The following part is organized as follows:  Section 2 is devoted to giving some known results; in Section 3, we would like to establish the proofs of Theorem \ref{main_theorem1} and Corollary \ref{cor1}.

\vspace{30pt}

\section{preliminary results}
In this part we state some known results, which are important to prove Theorem \ref{main_theorem1} and Corollary \ref{cor1}. As for the proofs for these known results, we would like to refer the interested readers to \cite{Dauge1992} and \cite{Stein1970}.

The first one is global $W^{2, p}$ estimate for Poisson equation given by Dauge\cite{Dauge1992}:

\begin{theorem}(Corollary 3.10, 3.12 of \cite{Dauge1992})
\label{dauge}
Assume $\Omega$ is a Lipschitz polyhedral domain in $\mathbb{R}^{3}$. Then the following statements hold true:

(i) for any  $\frac{6}{5} \leq p < \frac{4}{3}$,  there exists a positive constant $C_{p}$ such that
\begin{align}
\label{estimate_2-1}
\Vert u\Vert_{W^{2,p}(\Omega)} \leq C_{p}  \Vert \Delta u\Vert_{L^{p}(\Omega)} ,
\qquad \forall u \in W^{2,p}(\Omega) \cap W_{0}^{1,p}(\Omega).
\end{align}
(ii)  if  $\Omega$ is also convex, then (\ref{estimate_2-1}) holds for any  $\frac{6}{5}< p\leq 2$.
\end{theorem}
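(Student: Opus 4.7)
The plan is to combine Dauge's Laplacian estimate (Theorem~\ref{dauge}) with the classical freezing-of-coefficients technique, converting the variable-coefficient problem to a constant-coefficient one by a partition of unity, and the constant-coefficient problem to the Laplacian by a linear change of variables.

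\emph{Step 1 (constant coefficients).} Fix any symmetric positive-definite matrix $A_0$ with $\lambda_0 I_3 \le A_0 \le \lambda_1 I_3$. Set $T := A_0^{1/2}$, $\tilde\Omega := T^{-1}(\Omega)$, and $v(y) := u(Ty)$. A direct computation gives $D_y^2 v(y) = T\,D_x^2 u(Ty)\,T$, so $\Delta v(y) = \mathrm{tr}(T^2 D^2 u(Ty)) = (A_0 : D^2 u)(Ty)$. Since $T^{-1}$ is linear and nondegenerate, $\tilde\Omega$ is a Lipschitz polyhedral domain (convex if $\Omega$ is). Applying Theorem~\ref{dauge} on $\tilde\Omega$ and undoing the change of variables yields, for all $u \in W^{2,p}(\Omega)\cap W_0^{1,p}(\Omega)$,
\[
  \|u\|_{W^{2,p}(\Omega)} \le C_0\,\|A_0 : D^2 u\|_{L^p(\Omega)},
\]
with $C_0 = C_0(p,\lambda_0,\lambda_1,\Omega)$ independent of $A_0$. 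Part~(ii) of Theorem~\ref{dauge} handles $p \in (\tfrac65,2]$ in the convex case, since linear maps preserve convexity of polyhedra.

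\emph{Step 2 (freezing and assembly).} Since $A \in C^0(\overline\Omega)$, for any $\varepsilon > 0$ there exists $\delta = \delta(\varepsilon)>0$ such that $|A(x)-A(y)|<\varepsilon$ whenever $|x-y|<\delta$. Cover $\overline\Omega$ by balls $B_\delta(x_i)$, $i=1,\ldots,N$, of bounded overlap multiplicity $M$ independent of $\delta$, and let $\{\phi_i\}$ be a subordinate smooth partition of unity with $|D^k\phi_i|\le C\delta^{-k}$. Then $w_i := \phi_i u \in W^{2,p}(\Omega)\cap W_0^{1,p}(\Omega)$. Applying Step~1 with $A_0=A(x_i)$ to $w_i$, expanding via the Leibniz rule, and using $|A-A(x_i)|<\varepsilon$ on $\mathrm{supp}\,\phi_i$, one obtains
\[
  \|w_i\|_{W^{2,p}(\Omega)} \le C_0\bigl(\|\phi_i A : D^2 u\|_{L^p} + \varepsilon\,\|D^2 u\|_{L^p(\mathrm{supp}\,\phi_i)} + C(\delta)\|u\|_{W^{1,p}(\Omega)}\bigr).
\]
Since $u=\sum_i w_i$ with supports of multiplicity $\le M$, Jensen's inequality gives $\|D^2 u\|_{L^p(\Omega)}^p \le M^{p-1}\sum_i\|D^2 w_i\|_{L^p}^p$, and summing yields
\[
  \|D^2 u\|_{L^p(\Omega)}^p \le C_1\bigl(\|A:D^2 u\|_{L^p}^p + \varepsilon^p\|D^2 u\|_{L^p}^p + C(\varepsilon)\|u\|_{W^{1,p}}^p\bigr),
\]
with $C_1 = C_1(C_0,M,p)$ independent of $\varepsilon$. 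Choose $\varepsilon$ so small that $C_1\varepsilon^p < 1/2$ and absorb the middle term; the $\|u\|_{W^{1,p}}$ remainder is then controlled by the standard interpolation $\|u\|_{W^{1,p}}\le\eta\|D^2 u\|_{L^p}+C_\eta\|u\|_{L^p}$ together with one more absorption, producing (\ref{main_estimate1}).

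\emph{Main obstacle.} The delicate point is the uniformity in Step~1: obtaining $C_0$ independent of $A_0$ requires Dauge's constant $C_p(\tilde\Omega)$ to remain bounded as $\tilde\Omega$ varies over the compact family $\{A_0^{-1/2}(\Omega):\lambda_0 I_3\le A_0\le\lambda_1 I_3\}$. One either extracts from \cite{Dauge1992} the continuous dependence of $C_p(\tilde\Omega)$ on the finite set of geometric parameters describing a Lipschitz polyhedron (boundary Lipschitz constants and edge/vertex angles, all of which vary continuously in $A_0$), or runs a compactness-contradiction argument on this family. Everything else in the proof is standard bookkeeping; the convex case of part~(2) requires no modification beyond invoking part~(ii) of Theorem~\ref{dauge} in Step~1.
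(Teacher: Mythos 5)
Your proposal does not prove Theorem~\ref{dauge}: it \emph{uses} it. Theorem~\ref{dauge} is the $W^{2,p}$ estimate for the Poisson equation $\Delta u = f$ with Dirichlet boundary data on a Lipschitz polyhedron, quoted verbatim from Corollary~3.10 and 3.12 of \cite{Dauge1992}; the paper states it as a known input and offers no proof, referring the reader to Dauge's work. Your Step~1 begins ``Applying Theorem~\ref{dauge} on $\tilde\Omega$,'' so read as a proof of Theorem~\ref{dauge} the argument is circular. What you have in fact sketched is a derivation of the variable-coefficient estimate $(\ref{main_estimate1})$ of Theorem~\ref{main_theorem1} \emph{from} Theorem~\ref{dauge}: diagonalize $A(\Bx_0)$ by a linear change of variables to reduce to the Laplacian, localize with a partition of unity, expand by Leibniz, absorb the $\varepsilon\Vert D^2 u\Vert_{L^p}$ term using uniform continuity of $A$, and interpolate $\Vert u\Vert_{W^{1,p}}$ between $\Vert D^2 u\Vert_{L^p}$ and $\Vert u\Vert_{L^p}$. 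Up to bookkeeping this is the paper's own route in Proposition~\ref{prop2} and the proof of Theorem~\ref{main_theorem1}, where the absorption is carried out via weighted seminorms $\Phi_k$ over a continuum of cutoff scales and the Stein extension of Proposition~\ref{prop1} is invoked so that the interior interpolation inequality of \cite[Theorem~7.28]{GT01} can be applied on the full box $\CC_\sigma$ rather than on $\CC_\sigma\cap\Omega$.

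A genuine proof of Theorem~\ref{dauge} would have to analyze the corner and edge singularities of the Dirichlet Laplacian on polyhedra; the thresholds $p=\tfrac65$, $p=\tfrac43$, and $p=2$ reflect the location of corner eigenvalues and are obtained via the Mellin-transform and weighted-Sobolev machinery of \cite{Dauge1992}, none of which is reconstructible from your outline. Incidentally, the ``main obstacle'' you flag, namely uniformity of $C_0$ over a compact family of transformed domains, is not actually needed: since the cover is finite, the maximum of the finitely many local constants suffices, which is exactly how the paper proceeds (the constant $\tilde C_p$ in Proposition~\ref{prop2} is allowed to depend on the base point $\Bx_0$, and the finite covering at the end of the proof of Theorem~\ref{main_theorem1} absorbs that dependence).
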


\begin{remark}It is worth noticing that by Dahlberg's result ( see \cite{JK1995}), for any $p>1$, there exists some Lipschitz domain $\Omega$ and $f\in C^{\infty}(\overline{\Omega})$ such that the solution $u$ to
\begin{equation}
\label{eq2}
\left \{
 \begin{array}{ll}
\Delta u=f \ \ & \text{in}\ \Omega\\ \\
u=0 \  &\text{on}\ \partial\Omega
 \end{array}
\right.
 \end{equation} does not belong to $W^{2, p}(\Omega)$.  Hence the estimate (\ref{estimate_2-1}) usually does not hold for general Lipchitz domain.

\end{remark}

 Another important tool is the so-called Stein extension Theorem. Before stating the Stein extension Theorem, we would like to give some preliminary knowledge.
 \begin{lemma}(Theorem 2 of Chapter VI  in \cite{Stein1970})
 \label{lemma2}
 Let $F$ be any closed set in $\mathbb{R}^{n}$ with $n\geq2$ and $\delta(x)$ be the distance of $x$ from $F.$ Then there exists a function $\Delta(x)$ defined in $F^c$ such that \\

 $(a) \  c_1 \delta(x)\leq \Delta(x) \leq c_2 \delta(x), \ \ \forall \ x\in F^c $   \\

 $(b)$   $\Delta(x)$ is $C^{\infty}$ in $F^c$ and
 $$|\frac{\partial^{\alpha}}{\partial x^{\alpha}}\Delta(x)| \leq B_{\alpha} (\delta(x))^{1-|\alpha|}$$
 where $B_{\alpha}$, $c_1$ and $c_2$ are independent of $F$.  Here $\Delta(x)$ is called the regularized distance from $F$.
 \end{lemma}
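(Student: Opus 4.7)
I would prove this via Whitney's decomposition of the open set $F^c$. Recall that every open set in $\mathbb{R}^n$ admits a decomposition into a countable family of closed dyadic cubes $\{Q_k\}_{k\geq 1}$ with pairwise disjoint interiors such that $F^c=\bigcup_k Q_k$ and the side lengths $d_k:=\diam(Q_k)$ satisfy $d_k\leq \mathrm{dist}(Q_k,F)\leq 4 d_k$ (this is Theorem 1 of Chapter VI in \cite{Stein1970}). This dyadic family is the backbone of the construction.

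\textbf{Construction.} For a small fixed $\varepsilon\in(0,1/4)$ let $Q_k^*$ be the cube concentric with $Q_k$ whose side length is $(1+\varepsilon)$ times that of $Q_k$. Picking a nonnegative $\psi\in C_c^\infty(\mathbb{R}^n)$ which equals $1$ on the unit cube and vanishes outside $(1+\varepsilon)$ times the unit cube, and rescaling/translating, one obtains $\psi_k$ supported in $Q_k^*$ with $\psi_k\equiv 1$ on $Q_k$ and $|\partial^\alpha \psi_k|\leq C_\alpha d_k^{-|\alpha|}$. The dilated family $\{Q_k^*\}$ still has bounded overlap (each point lies in at most $N=N(n)$ of them), so $\Phi:=\sum_k \psi_k$ is smooth with $1\leq \Phi\leq N$, and $\varphi_k:=\psi_k/\Phi$ is a smooth partition of unity on $F^c$ subordinate to $\{Q_k^*\}$ with the same scale-invariant derivative bounds $|\partial^\alpha \varphi_k|\leq C_\alpha' d_k^{-|\alpha|}$. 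Finally define
\begin{equation*}
\Delta(x):=\sum_k d_k\,\varphi_k(x),\qquad x\in F^c.
\end{equation*}

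\textbf{Verification.} Part (a) is immediate: whenever $x\in Q_k^*$ one has $d_k\sim \mathrm{dist}(Q_k,F)\sim \delta(x)$ with purely dimensional constants, so the convex combination $\Delta(x)$ satisfies $c_1\delta(x)\leq \Delta(x)\leq c_2\delta(x)$. For (b), local finiteness of the sum yields $\Delta\in C^\infty(F^c)$. To obtain the sharp derivative estimate for $|\alpha|\geq 1$, I would exploit the cancellation $\sum_k \partial^\alpha \varphi_k\equiv 0$: fixing $x_0\in F^c$ and any index $k_0$ with $x_0\in Q_{k_0}^*$, one rewrites
\begin{equation*}
\partial^\alpha \Delta(x_0)=\sum_k d_k\,\partial^\alpha\varphi_k(x_0)=\sum_k (d_k-d_{k_0})\,\partial^\alpha\varphi_k(x_0).
\end{equation*}
Only $O(1)$ indices contribute to this sum; for each of them $|d_k-d_{k_0}|\leq C d_{k_0}$ (overlapping Whitney cubes have comparable side lengths) and $|\partial^\alpha\varphi_k(x_0)|\leq C d_k^{-|\alpha|}\sim d_{k_0}^{-|\alpha|}$, giving $|\partial^\alpha\Delta(x_0)|\leq B_\alpha d_{k_0}^{\,1-|\alpha|}\sim B_\alpha \delta(x_0)^{1-|\alpha|}$, as required. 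The constants $c_1,c_2,B_\alpha$ depend only on $n$ and $\varepsilon$, not on $F$.

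\textbf{Main obstacle.} The analytic heart of the argument is the cancellation step used to pass from the naive bound $|\partial^\alpha\Delta|\leq C\delta^{-|\alpha|}$ to the sharp bound $|\partial^\alpha\Delta|\leq B_\alpha\delta^{1-|\alpha|}$: without subtracting the representative diameter $d_{k_0}$, one loses the decisive extra factor of $\delta$. Once the identity $\sum_k \partial^\alpha \varphi_k\equiv 0$ (for $|\alpha|\geq 1$) is exploited, the rest of the proof is combinatorial bookkeeping with the Whitney cubes and should pose no serious difficulty.
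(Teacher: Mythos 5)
Your proof is correct, but note that the paper itself does not prove this lemma at all: it is quoted as Theorem 2 of Chapter VI of Stein's book, and Stein's own proof is exactly the Whitney-decomposition construction you describe ($\Delta=\sum_k d_k\varphi_k$ with a partition of unity subordinate to the dilated Whitney cubes), so you have essentially reproduced the cited argument. One small remark: the cancellation step you single out as the ``main obstacle'' is not actually needed, since for $x\in Q_k^*$ each term already satisfies $d_k\,|\partial^\alpha\varphi_k(x)|\leq C_\alpha d_k^{1-|\alpha|}\sim \delta(x)^{1-|\alpha|}$ and only boundedly many terms are nonzero, which gives the sharp bound directly; your subtraction of $d_{k_0}$ is valid but superfluous, so this does not affect correctness.
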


 \begin{lemma}(Lemma 2 of Section 3 of  Chapter VI  in \cite{Stein1970})
 \label{lemma3}Suppose $D=\{(x, y)\in \mathbb{R}^{n+1}: y>\phi(x)\}$, where $\phi: \mathbb{R}^n \rightarrow \mathbb{R}$ is a Lipschitz function satisfying: $|\phi(x)-\phi(x')| \leq M|x-x'|$, for all $x, x'\in \mathbb{R}^n$. Then there exists a constant $c>0$,which depends only on the Lipschitz bound of $D$, so that if $(x, y)\in F^c$, then $c \Delta(x, y) \geq \phi(x)-y$.

 \end{lemma}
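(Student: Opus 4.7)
The plan is to reduce the inequality to a direct Euclidean-distance comparison and then upgrade from $\delta$ to the regularized distance $\Delta$ by appealing to Lemma \ref{lemma2}(a). With the natural choice $F=\overline{D}$, every point of $F^{c}$ satisfies $y<\phi(x)$, so that $\phi(x)-y>0$ and the asserted inequality is nontrivial (for $y\geq \phi(x)$ the right-hand side is $\leq 0$ while $\Delta\geq 0$, and there is nothing to prove).

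First I would fix $(x,y)\in F^{c}$ and, using the definition of the distance, choose $(x',y')\in F$ with $|(x,y)-(x',y')|$ equal to $\delta(x,y)$ (or within an arbitrarily small additive error if the infimum is not attained). Since $(x',y')\in F=\overline{D}$, we have $y'\geq \phi(x')$, and the Lipschitz property of $\phi$ gives
\begin{equation*}
\phi(x)-y \;=\; \bigl(\phi(x)-\phi(x')\bigr)+\bigl(\phi(x')-y'\bigr)+(y'-y)
\;\leq\; M\,|x-x'|+(y'-y).
\end{equation*}
Because the left-hand side is positive, $y'-y$ may be replaced by $|y'-y|$, and the Cauchy--Schwarz inequality then yields
\begin{equation*}
\phi(x)-y \;\leq\; M\,|x-x'|+|y'-y| \;\leq\; \sqrt{1+M^{2}}\,\sqrt{|x-x'|^{2}+(y-y')^{2}}
\;=\; \sqrt{1+M^{2}}\,\delta(x,y).
\end{equation*}

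The conclusion is then immediate: Lemma \ref{lemma2}(a) supplies a universal constant $c_{1}>0$ with $\Delta(x,y)\geq c_{1}\delta(x,y)$, so that
\begin{equation*}
\Delta(x,y)\;\geq\; \frac{c_{1}}{\sqrt{1+M^{2}}}\,\bigl(\phi(x)-y\bigr).
\end{equation*}
Setting $c=\sqrt{1+M^{2}}/c_{1}$ delivers $c\,\Delta(x,y)\geq \phi(x)-y$ with $c$ depending only on $M$, i.e.\ only on the Lipschitz bound of $D$. I do not foresee any real obstacle; the only point requiring a line of care is the selection of a nearest (or near-optimal) point $(x',y')\in F$, which is handled in the standard way via a minimizing sequence and passing to the limit in the inequality just derived.
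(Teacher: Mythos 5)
Your proof is correct. The paper does not prove this lemma itself --- it simply quotes it from Stein (Lemma 2 of Section 3, Chapter VI) --- and your argument is exactly the standard one behind that citation: with $F=\overline{D}$, the Lipschitz bound plus Cauchy--Schwarz gives $\phi(x)-y\leq\sqrt{1+M^{2}}\,\delta(x,y)$ for $(x,y)\in F^{c}$, and Lemma \ref{lemma2}(a) (with its universal constant $c_{1}$, independent of $F$) upgrades $\delta$ to the regularized distance $\Delta$, yielding $c\,\Delta\geq\phi(x)-y$ with $c=\sqrt{1+M^{2}}/c_{1}$ depending only on the Lipschitz bound, as required. The handling of the near-optimal point of $F$ via a minimizing sequence is fine (in fact the infimum is attained since $F$ is closed and nonempty), so there is no gap.
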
\

 Let $$\delta^* =2c \Delta.$$
 Then by Lemma \ref{lemma3}, we see that $\delta^* (x, y)\geq \phi(x)-y$, if $(x, y)\in F^c$. Then we can state the Stein extension theorem as follows:

 \begin{theorem}( Theorem $5' $ in Chapter $VI$ in \cite{Stein1970})
\label{extension}
Let $D=\{(x, y)\in \mathbb{R}^{n+1}: y>\phi(x)\}$, where $\phi: \mathbb{R}^n \rightarrow \mathbb{R}$ is a Lipschitz function satisfying: $|\phi(x)-\phi(x')| \leq M|x-x'|$, for all $x, x'\in \mathbb{R}^n$ with $n\geq1$.
Then   there exists a bounded linear extension operator $\mathfrak{E}: W^{k, p}(D) \rightarrow W^{k, p}(\mathbb{R}^{n+1})$ for any  $1 \leq p \leq \infty$, and any nonnegative integer $k$. More precisely, for any $f\in W^{k, p}(D)\cap C^{\infty}(D)$, $\mathfrak{E}(f)(x, y)=f(x,y)$, if $(x, y)\in \overline{D}$ and  $$\mathfrak{E}(f)(x, y)= \int_1^{\infty}\psi(\lambda)f(x, y+\lambda\delta^*(x, y))d\lambda,\ \ \forall \ (x, y)\in {\overline{D}}^c.$$
Here $\psi$ is a continuous function on $[1, \infty)$ satisfying:$$\psi(\lambda)=O(\lambda^{-N}) \text{as $\lambda\rightarrow \infty$ for any $N$}, $$
$$\int_1^{\infty}\psi(\lambda) d\lambda=1\ \text{ and } \ \int_1^{\infty}\psi(\lambda)\lambda^k d\lambda=0\ \text{ for any $k=1,2, \cdots$.}$$
\end{theorem}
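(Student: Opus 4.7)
The plan is to follow Stein's reflection argument in three stages: construct the kernel $\psi$ with the prescribed moments and decay; verify that $\mathfrak E f$ matches $f$ and its derivatives up to order $k$ across $\partial D$; and bound $\mathfrak E f$ in $W^{k,p}(\overline D^c)$ (since on $\overline D$ it coincides with $f$, this suffices).

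For the kernel, I would take an explicit formula such as $\psi(\lambda)=\frac{e}{\pi}\operatorname{Im}\bigl(e^{-(\lambda-1)^{1/4}(1-i)}\bigr)$, whose super-polynomial decay is built into the exponent, and compute the moments $\int_1^\infty \psi(\lambda)\lambda^m\,d\lambda$ by contour deformation to verify that they vanish for $m\ge 1$ and normalize to $1$ for $m=0$. For the boundary matching, for $(x,y)\in \overline D^c$ near $\partial D$ one has $\delta^*(x,y)\to 0$, so a $(k+1)$-term Taylor expansion of $f(x,y+\lambda\delta^*(x,y))$ in the $y$-variable, integrated against $\psi$, collapses to $f(x,y)+O((\delta^*)^{k+1})$ by the moment conditions; the same expansion applied to derivatives up to order $k$ yields matching across $\partial D$, so $\mathfrak E f\in W^{k,p}(\mathbb R^{n+1})$ once its $L^p$ norm on $\overline D^c$ is controlled.

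The core estimate, then, is the $L^p$ bound on $\partial^\alpha\mathfrak E f$ for $|\alpha|\le k$. Differentiating under the integral and applying the Fa\`a di Bruno formula, $\partial^\alpha \mathfrak E f(x,y)$ becomes a finite sum of integrals of the form
\[
\int_1^\infty \psi(\lambda)\,\lambda^m\Bigl(\prod_i \partial^{\gamma_i}\delta^*(x,y)\Bigr)(\partial^\beta f)\bigl(x,y+\lambda\delta^*(x,y)\bigr)\,d\lambda,
\]
with $m\le |\alpha|$, $|\beta|\le|\alpha|$ and $\sum_i (|\gamma_i|-1)=|\alpha|-|\beta|$. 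By Lemma~\ref{lemma2}(b) the product of derivatives of $\delta^*$ is bounded by $C\,\delta(x,y)^{|\beta|-|\alpha|}$, while by Lemma~\ref{lemma3} the reflected point $(x,y+\lambda\delta^*(x,y))$ lies at distance comparable to $\lambda\,\delta(x,y)$ from $\partial D$ for $\lambda\ge 1$. Applying Minkowski's integral inequality in $\lambda$ together with the change of variables $\eta=y+\lambda\delta^*(x,y)$ (whose Jacobian is bounded above and below by positive constants uniformly in $\lambda\ge 1$, using the Lipschitz epigraph structure), the singular factor $\delta(x,y)^{|\beta|-|\alpha|}$ is absorbed into $\delta(x,\eta)^{|\alpha|-|\beta|}$ at a cost of $\lambda^{|\alpha|-|\beta|}$, leaving each term bounded by $C\lambda^N\|\partial^\beta f\|_{L^p(D)}$ for some $N$. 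The outer $\lambda$-integration converges because $|\psi(\lambda)|\lambda^N\in L^1([1,\infty))$.

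The main obstacle is orchestrating the cancellations inside this core estimate. The chain-rule derivatives of $\delta^*$ generate a genuine singularity $\delta^{|\beta|-|\alpha|}$ at $\partial D$ which must be exactly compensated by the polynomial-in-$\lambda$ gain from the reflection; simultaneously the change-of-variables Jacobian must be controlled uniformly in $\lambda\ge 1$. Without the super-polynomial decay of $\psi$ the final $\lambda^N$ growth would not be integrable, and without the Lipschitz epigraph geometry (encoded in Lemmas~\ref{lemma2} and~\ref{lemma3}) the $\lambda$-gain would not be available. Verifying that the moment kernel, the regularized-distance estimates, and the epigraph geometry combine uniformly across all terms produced by Fa\`a di Bruno is where the real work lies.
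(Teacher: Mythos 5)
This statement is quoted verbatim from Stein's book (Theorem~$5'$ of Chapter~VI in \cite{Stein1970}); the paper gives no proof of it and simply cites the reference, so there is no in-paper argument to compare against. What you have written is an attempted reconstruction of Stein's original proof. Your high-level architecture --- an explicit moment kernel with super-polynomial decay, Taylor matching across $\partial D$ driven by the vanishing moments, and a Fa\`a-di-Bruno expansion controlled by Lemma~\ref{lemma2}(b) and Lemma~\ref{lemma3} --- is indeed the right architecture and matches Stein's.

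Two specific steps of your sketch do not hold as stated. First, your proposed kernel $\psi(\lambda)=\tfrac{e}{\pi}\operatorname{Im}\bigl(e^{-(\lambda-1)^{1/4}(1-i)}\bigr)$ has \emph{all} moments vanishing, including the zeroth: substituting $\lambda = 1 + t^{4}$ turns $\int_{1}^{\infty}\psi(\lambda)\lambda^{m}\,d\lambda$ into a linear combination of $\int_{0}^{\infty}e^{-t}\sin(t)\,t^{4k+3}\,dt$, each of which is proportional to $\sin\bigl(\pi(k+1)\bigr)=0$. Stein's formula carries an extra factor $\lambda^{-1}$ precisely to keep the zeroth moment nonzero; without it the normalization $\int_{1}^{\infty}\psi\,d\lambda=1$ fails. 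Second, the claim that the Jacobian of $y\mapsto \eta = y+\lambda\delta^{*}(x,y)$ is ``bounded above and below by positive constants uniformly in $\lambda\ge 1$'' is false: the Jacobian is $1+\lambda\,\partial_{y}\delta^{*}$, and Lemma~\ref{lemma2}(b) only gives $\lvert\partial_{y}\delta^{*}\rvert\le C$, so for large $\lambda$ this is unbounded above (and could even fail to be positive if $\partial_{y}\delta^{*}<0$). Consequently the phrase ``the singular factor $\delta^{\,\lvert\beta\rvert-\lvert\alpha\rvert}$ is absorbed into $\delta(x,\eta)^{\lvert\alpha\rvert-\lvert\beta\rvert}$'' does not parse: $\partial^{\beta}f$ carries no $\delta$-weight, so there is nothing to absorb into. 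In Stein's actual argument the vanishing higher moments of $\psi$ (and an integration by parts in $\lambda$ that exchanges $\partial_{y}f$ for $\partial_{\lambda}$) are used in the $L^{p}$ estimate itself, not only for the boundary matching, and this is what neutralizes the negative powers of $\delta$ produced by Fa\`a di Bruno. Filling in that bookkeeping is the substantive missing piece of your proposal.
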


\vspace{30pt}

\section{proof of Main Results}
In this part, we will prove Theorem \ref{main_theorem1} and Corollary \ref{cor1}.

Firstly  we turn to the proof of Theorem \ref{main_theorem1}, which consists of several steps. In the beginning, we  choose $\Bx_{0} \in \partial \Omega$ arbitrarily.
Since $\Omega$ is a Lipschitz domain in $\mathbb{R}^{3}$,
there exists an open neighborhood $\CC_{R,H}$ of $\Bx_{0}$
in $\mathbb{R}^{3}$, and new orthogonal coordinates
$(x_{1},x_{2}, x_{3})$ such that in the new coordinates,
$\Bx_{0} = (0, 0, 0) \in \mathbb{R}^{3}$,  the neighborhood
can be represented by
\begin{align}
\label{Lipschitz_box}
\CC_{R,H} = \{ (x_{1}, x_{2}, 0)
+ t \Bv : (x_{1}, x_{2}) \in [-R, R]^{2},
-H < t < H\},
\end{align}
with the vector $\Bv = (0, 0, 1) \in \mathbb{R}^{3}$ , and
\begin{align*}
\CC_{R,H} \cap \Omega = & \CC_{R,H} \cap
\{ (x_{1}, x_{2},0) + t \Bv:
(x_{1},x_{2}) \in [-R,R]^{2},
t < \zeta (x_{1}, x_{2}) \}, \\
\CC_{R,H} \cap \partial\Omega = &\CC_{R,H} \cap
\{ (x_{1},x_{2},0) + t \Bv:
(x_{1}, x_{2}) \in [-R,R]^{2},
t = \zeta (x_{1}, x_{2}) \}, \\
\CC_{R,H} \cap \overline{\Omega}^{c} = & \CC_{R,H} \cap
\{ (x_{1}, x_{2},0) + t \Bv:
(x_{1}, x_{2}) \in [-R,R]^{2},
t > \zeta (x_{1}, x_{2}) \},
\end{align*}
for some Lipschitz function $\zeta : [-R,R]^{2}\rightarrow
\mathbb{R}$ satisfying
\begin{align}
\label{Lipschitzness1}
\zeta(0, 0) = 0, \text{ and } \vert \zeta (x_{1}, x_{2})
\vert \leq 0.05 H \quad \forall (x_{1}, x_{2})
\in [-R, R]^{2}.
\end{align}
Without losing of generality, we assume $0< R \leq H$. According to \cite[Theorem~$1$ of Section~$3.1.1$]{Evans1992}, $\zeta$ can be extended to be a global Lipschitz function on $\mathbb{R}^{2}$. We define
\begin{align}
\label{hypergraph_domain}
D = \{ \Bx \in \mathbb{R}^{3}: x_{3} < \zeta (x_{1}, x_{2})\}
\end{align}
and
\begin{align}
\label{Kone1}
\BK = \{ \Bx \in \mathbb{R}^{3} : x_{3} \geq
(L+1)\sqrt{x_{1}^{2} + x_{2}^{2}} \},
\end{align}
where $L$ is the Lipschitz constant of $\partial D$ introduced in (\ref{hypergraph_domain}).  We have

\begin{proposition}
\label{prop1}
For any $v \in W^{2,p}(\CC_{0.9R,0.9H}\cap \Omega)$, $v$ can be extended to $\CC_{0.9R, 0.9H}$ satisfying $v \in W^{2,p}(\CC_{0.9R, 0.9H})$, and there exists a uniform positive constant $C_{1}$ such that for any $k=0,1,2$ and any $\sigma \in [0.2, 0.9]$,  there holds:
\begin{align}
\label{claim1}
\Vert v\Vert_{W^{k,p}(\CC_{\sigma R,\sigma H} \cap
\overline{\Omega}^{c} )} \leq
C_{1}\Vert v\Vert_{W^{k,p}(\CC_{\sigma R,0.2 H} \cap \Omega)}.
\end{align}
 \end{proposition}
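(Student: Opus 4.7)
The strategy is to produce the extension by applying Stein's extension theorem (Theorem~\ref{extension}) to a cutoff of $v$, and then pasting the result together with $v$ across $\partial\Omega\cap\CC_{0.9R,0.9H}$. First fix a smooth cutoff $\chi\in C_{c}^{\infty}(\mathbb{R}^{3})$ with $\mathrm{supp}(\chi)\subset(-0.95R,0.95R)^{2}\times(-0.2H,0.2H)$ and $\chi\equiv 1$ on $[-0.9R,0.9R]^{2}\times[-0.19H,0.19H]$. Since $|\zeta|\leq 0.05H$, this ensures $\chi\equiv 1$ in a neighbourhood of $\partial\Omega\cap\CC_{0.9R,0.9H}$. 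Extending $\chi v$ by zero outside $\CC_{R,H}\cap\Omega$ yields an element of $W^{2,p}(D)$, where $D$ is the hypergraph region from (\ref{hypergraph_domain}), and I would apply the Stein operator $\mathfrak{E}$ to obtain $\mathfrak{E}(\chi v)\in W^{2,p}(\mathbb{R}^{3})$ with $\mathfrak{E}(\chi v)=\chi v$ on $\overline{D}$. The extension of $v$ is then defined piecewise by $\hat{v}=v$ on $\CC_{0.9R,0.9H}\cap\Omega$ and $\hat{v}=\mathfrak{E}(\chi v)$ on $\CC_{0.9R,0.9H}\cap\overline{\Omega}^{c}$.

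To verify that $\hat{v}\in W^{2,p}(\CC_{0.9R,0.9H})$, I would check the compatibility of the traces of $\hat{v}$ and $\nabla\hat{v}$ from the two sides of $\partial\Omega\cap\CC_{0.9R,0.9H}$. From below, $\hat{v}=v$ yields the traces $(v,\nabla v)|_{\partial\Omega}$. From above, $\hat{v}=\mathfrak{E}(\chi v)$ extends $\chi v$ as a $W^{2,p}(\mathbb{R}^{3})$ function across $\partial D$ by Theorem~\ref{extension}, so its traces on $\partial D$ coincide with those of $\chi v$ from below; since $\chi\equiv 1$ near $\partial\Omega$, these in turn equal $(v,\nabla v)|_{\partial\Omega}$.

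For the estimate (\ref{claim1}), the key feature is the horizontal locality of Stein's reflection formula. Indeed, the integrand $(\chi v)(x_{1},x_{2},x_{3}-\lambda\delta^{*}(x_{1},x_{2},x_{3}))$ preserves the first two coordinates, and differentiating under the integral sign introduces only factors $\partial^{\alpha}\delta^{*}$ at $(x_{1},x_{2},x_{3})$ (bounded by $B_{\alpha}\delta^{1-|\alpha|}$ via Lemma~\ref{lemma2}) together with derivatives of $\chi v$ at the reflected point; hence $\mathfrak{E}(\chi v)$ and its first two derivatives along the vertical line $\{(x_{1},x_{2})\}\times\mathbb{R}$ depend only on $\chi v$ along that same line. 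Combining Minkowski's inequality in $\lambda$, a change of variables in $x_{3}$, and the rapid decay of $\psi$ gives a slice-wise bound $\|\mathfrak{E}(\chi v)(x_{1},x_{2},\cdot)\|_{W^{k,p}(\mathbb{R})}\leq C\|(\chi v)(x_{1},x_{2},\cdot)\|_{W^{k,p}(\mathbb{R})}$ uniform in $(x_{1},x_{2})$, and Fubini then yields
\begin{align*}
\|\mathfrak{E}(\chi v)\|_{W^{k,p}(\CC_{\sigma R,\sigma H}\cap\overline{\Omega}^{c})}\leq C\|\chi v\|_{W^{k,p}([-\sigma R,\sigma R]^{2}\times\mathbb{R}\cap D)}\leq C_{1}\|v\|_{W^{k,p}(\CC_{\sigma R,0.2H}\cap\Omega)},
\end{align*}
where the last step uses the product rule and $\mathrm{supp}(\chi)\subset(-0.95R,0.95R)^{2}\times(-0.2H,0.2H)$. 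The main technical obstacle is the slice-wise Stein estimate for $k=2$: second differentiation produces terms involving $\partial^{2}\delta^{*}$, which is only $O(\delta^{-1})$ near $\partial D$, and controlling these singular contributions requires the moment cancellations $\int_{1}^{\infty}\psi(\lambda)\lambda^{k}\,d\lambda=0$ for $k\geq 1$ exactly as in Stein's original $W^{k,p}$ proof, but now carried out slice by slice.
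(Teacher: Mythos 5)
Your overall strategy (cut off, apply Stein's reflection across the graph, paste, and exploit the horizontal locality of the formula to localize the estimate to $\CC_{\sigma R,0.2H}\cap\Omega$) is the same as the paper's, but your construction has a genuine gap at the very first step: the cutoff $\chi$ is required to be $\equiv 1$ on $[-0.9R,0.9R]^{2}\times[-0.19H,0.19H]$ while its support reaches out to $(-0.95R,0.95R)^{2}$ horizontally, so $\chi v$ uses values of $v$ at points with $0.9R<|x_{i}|<0.95R$ that lie outside $\CC_{0.9R,0.9H}\cap\Omega$, where the hypothesis $v\in W^{2,p}(\CC_{0.9R,0.9H}\cap\Omega)$ gives you nothing (your phrase ``extending $\chi v$ by zero outside $\CC_{R,H}\cap\Omega$'' tacitly assumes $v$ is given on the larger box $\CC_{R,H}\cap\Omega$). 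You cannot repair this simply by shrinking $\mathrm{supp}(\chi)$ into $(-0.9R,0.9R)^{2}$: then $\chi<1$ on the part of $\partial\Omega\cap\CC_{0.9R,0.9H}$ lying over the outer frame of $[-0.9R,0.9R]^{2}$, the traces of $v$ and of $\mathfrak{E}(\chi v)$ no longer match there, the pasted function fails to lie in $W^{2,p}(\CC_{0.9R,0.9H})$, and the cases of (\ref{claim1}) with $\sigma$ near $0.9$ (which the proof of Theorem \ref{main_theorem1} uses, e.g.\ through $\Vert u\Vert_{L^{p}(\CC_{0.9})}$) are lost. The paper's remedy is precisely the step you are missing: first apply Stein's extension theorem to the bounded Lipschitz domain $\CC_{0.9R,0.9H}\cap\Omega$ to extend $v$ to the whole subgraph $D$ of (\ref{hypergraph_domain}), and only then cut off --- in fact a cutoff $\overline{\eta}(x_{3})$ in the vertical variable alone suffices. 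The horizontal locality of $\mathfrak{E}$ (which you correctly identify) then guarantees that on $\CC_{\sigma R,\sigma H}\cap\overline{\Omega}^{c}$ only values of the extended function on vertical segments inside $\CC_{\sigma R,0.2H}\cap\Omega$ enter, where it coincides with the original $v$, so (\ref{claim1}) is still stated purely in terms of $v$.

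Two smaller points. First, your slice-wise bound is not literally correct as written: $\partial_{x_{1}}\mathfrak{E}(\chi v)$ along a vertical line involves $\partial_{x_{1}}(\chi v)$ and $\partial_{x_{3}}(\chi v)$ on that line, so the one-dimensional norm $\Vert(\chi v)(x_{1},x_{2},\cdot)\Vert_{W^{k,p}(\mathbb{R})}$ (which contains only vertical derivatives) must be replaced by the $L^{p}$ norms along the line of \emph{all} derivatives of $\chi v$ up to order $k$; Fubini then still yields the desired estimate, so this is an imprecision rather than an obstruction. Second, the pointwise reflection formula (and hence your locality argument) is only available for smooth functions; for a general $W^{2,p}$ function one needs an approximation step, which the paper carries out by mollifying with a kernel supported in the cone $\BK$ of (\ref{Kone1}) so that the mollified functions remain admissible and the localized bounds pass to the limit uniformly in $\sigma\in[0.2,0.9]$. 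Your proposal should at least acknowledge this density argument.
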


\begin{proof}
Obviously, $\CC_{0.9R,0.9H}\cap \Omega$ is a Lipschitz domain in $\mathbb{R}^{3}$. Thus by Theorem \ref{extension}, $v$ can be extended to $D$ (see (\ref{hypergraph_domain})) such that $v \in W^{2,p}(D)$
and for any $k=0,1,2$
\begin{align*}
\Vert v\Vert_{W^{k,p}(D)} \leq \tilde{C}_{1}
\Vert v\Vert_{W^{k,p}(\CC_{0.9R,0.9H} \cap \Omega)}.
\end{align*}

Let $\overline{\eta}: \mathbb{R}\rightarrow\mathbb{R}$ be a  smooth function such that $\overline{\eta}(x_{3}) = 1$ for any $x_{3} \geq -0.1H$, $0 \leq \overline{\eta}(x_{3}) \leq 1$ for any $x_{3} \in \mathbb{R}$, and $\overline{\eta}(x_{3}) = 0$ for any $x_{3} \leq - 0.2 H$.
We define
\begin{align*}
w(\Bx) = \overline{\eta}(x_{3})v(\Bx),\qquad \forall
\Bx=(x_1, x_2, x_3) \in D.
\end{align*}
By (\ref{Lipschitzness1}) and the definition of ${\eta}$, it is easy to see that for any $(x_{1},x_{2})\in [-R, R]^{2}$,
\begin{align}
\label{w_v_eq1}
-0.1H < \zeta (x_{1}, x_{2}) \text{ and }
 w(\Bx) = v(\Bx), \quad \forall
 -0.1 H\leq x_{3} \leq \zeta (x_{1}, x_{2}).
\end{align}
In addition, for any $(x_{1},x_{2})\in [-R, R]^{2}$, there holds:
\begin{align*}
w(\Bx) = 0,  \quad \forall -H \leq x_{3} \leq -0.2H.
\end{align*}

For simplicity, we firstly assume
$w |_{\CC_{0.9 R, 0.9H}\cap \Omega}\in C^{\infty}
(\overline{\CC_{0.9 R, 0.9 H}\cap \Omega})$.
We extend $w$ from
$\CC_{0.9R,0.9H}\cap \Omega$ to $\CC_{0.9R,0.9H}$ by the
following extension
\begin{align}
\label{Stein_extension}
\mathfrak{E}w(\Bx) = \int_{1}^{\infty} w(x_{1}, x_{2}, x_{3}
+ \lambda \delta^{*}(\Bx))\psi (\lambda) d\lambda, \qquad
\forall \Bx \in \CC_{0.9R,0.9H} \cap \overline{\Omega}^{c}.
\end{align}
which is exactly the Stein extension in Theorem \ref{extension}. By (\ref{Stein_extension}), it is easy to see that for any $\Bx \in \CC_{0.9R,0.9H} \cap \overline{\Omega}^{c}$,
$\mathfrak{E}w(\Bx)$ relies only on the value of
$w(x_{1}, x_{2}, z)= \overline{\eta}(z) v(x_{1}, x_{2}, z)$
where $z \in (-0.2H, \zeta(x_{1}, x_{2})]$.  Therefore by
Theorem \ref{extension}, $w\in W^{2,p}(\CC_{0.9R,0.9H})$ and there exists a positive constant
$C_{1}^{\prime}$ such that for any $k=0,1,2$,
\begin{align}
\label{local_extension_ineq1}
\Vert w\Vert_{W^{k,p}(\CC_{\sigma R, \sigma H})} \leq &
C_{1}^{\prime} \Vert w\Vert_{W^{k,p}(\CC_{\sigma R, 0.2 H}
\cap \Omega)},
\end{align}
for any $0.2 \leq \sigma \leq 0.9$. Here $C_{1}^{\prime}$ is
independent of $\sigma \in [0.2 , 0.9]$.

In the following  the restriction that $w \in C^{\infty}
(\overline{\CC_{R,H}\cap \Omega})$ would be removed by approximation of mollifying (see also Section 3.2.4 in Chapter VI of \cite{Stein1970}). Let $\tilde{\eta}\in C_0^{\infty}(\mathbb{R}^{3})$ satisfy: $0\leq \tilde{\eta}\leq 1$ on $\mathbb{R}^{3}$,
$\int_{\mathbb{R}^{3}} \tilde{\eta}(x) \mathrm{d}x = 1$ and $\mathrm{supp}\tilde{\eta}\subset \BK^{o}$, where $\BK^o$ denotes the interior of the cone $\BK$ introduced in (\ref{Kone1}).
 $\forall \ \epsilon > 0$, we define $\tilde{\eta}(_{\epsilon}\Bx)
= \epsilon^{-3}\tilde{\eta}(\frac{\Bx}{\epsilon})$
for any $\Bx \in \mathbb{R}^{3}$ and
\begin{align*}
w_{\epsilon} (\Bx) = \int_{\text{supp} \tilde{\eta}_{\epsilon}}
w(\Bx - \By) \tilde{\eta}_{\epsilon}(\By) d\By, \qquad
\forall \Bx \in \CC_{0.9R,0.9H} \cap \Omega.
\end{align*}
It is easy to see that there exists $\epsilon_{0}>0$
such that for any $0< \epsilon < \epsilon_{0}$, we have
\begin{align*}
& \Bx - \By \in \CC_{R, H} \cap \Omega, \qquad \forall
\Bx \in \overline{\CC_{0.9R, 0.9H}\cap \Omega},
  \ \By \in \text{supp} \tilde{\eta}_{\epsilon}, \\
& x_{3} - y_{3} < -0.2 H,\qquad \forall
\Bx \in \overline{\CC_{0.9R, 0.9H}\cap \Omega}
\text{ with } x_{3} < -0.2H,  \ \By \in \text{supp} \tilde{\eta}_{\epsilon}.
\end{align*}
Thus for any $0 < \epsilon < \epsilon_{0}$,
$w_{\epsilon}$ is well-defined in $\CC_{0.9R, 0.9H}\cap \Omega$,
$\text{supp}\left(w_{\epsilon}|_{\CC_{0.9R, 0.9H}\cap \Omega}
\right) \subset \overline{\CC_{0.9R, 0.2H}\cap \Omega}$ and $w_{\epsilon}$
is a smooth function in an open neighborhood of
$\overline{\CC_{0.9R, 0.9H}\cap \Omega}$. Therefore,   $\mathfrak{E}w_{\epsilon}$  by (\ref{Stein_extension}) is a well-defined extension of $w_{\epsilon}$ from $\CC_{0.9R, 0.9H}\cap \Omega$
to $\CC_{0.9R, 0.9H}$.
In addition, we have that for any $k=0,1,2$,
\begin{align*}
\Vert w_{\epsilon} - w\Vert_{W^{k,p}(\CC_{0.9R, 0.9H}\cap \Omega)}
\rightarrow 0 \text{ as } \epsilon \rightarrow 0
\end{align*}
which shows , for any $k=0,1,2$,
\begin{align}
\label{uniform_smooth_conv}
\Vert w_{\epsilon} - w\Vert_{W^{k,p}(\CC_{\sigma R, \sigma H}\cap
\Omega)} \rightarrow 0 \text{ uniformly for any } \sigma \in [0.2, 0.9],
\end{align}
as $\epsilon \rightarrow 0$.
On the other hand, by (\ref{local_extension_ineq1}) and the fact that
$\text{supp}\left(w_{\epsilon}|_{\CC_{0.9R, 0.9H}\cap \Omega} \right)
\subset \overline{\CC_{0.9R, 0.2H}\cap
\Omega}$, we have that
for any $0 < \epsilon < \epsilon_{0}$,
\begin{align}
\label{uniform_smooth_bound}
\Vert w_{\epsilon}\Vert_{W^{k,p}(\CC_{\sigma R, \sigma H})} \leq &
C_{1}^{\prime} \Vert w_{\epsilon}\Vert_{W^{k,p}(\CC_{\sigma R, 0.2 H}
\cap \Omega)}
\end{align}
for any $\sigma \in [0.2, 0.9]$.
We define $\mathfrak{E}w (\Bx) := \lim_{\epsilon \rightarrow 0}
w_{\epsilon}(\Bx)$ for any $\Bx \in \CC_{0.9R, 0.9H}\cap
\overline{\Omega}^{c}$.
By (\ref{uniform_smooth_conv}) and (\ref{uniform_smooth_bound}), it infers that $w \in W^{2,p}(\CC_{0.9R, 0.9H})$ and
for any $k=0,1,2$,
\begin{align}
\label{local_extension_ineq2}
\Vert w\Vert_{W^{k,p}(\CC_{\sigma R, \sigma H})}
\leq & C_{1}^{\prime\prime}
\Vert w \Vert_{W^{k,p}(\CC_{\sigma R, 0.2 H}\cap \Omega)}
\end{align}
for any $\sigma \in [0.2, 0.9]$. Here the constant $C_{1}^{\prime\prime}$
is independent of $\sigma \in [0.2 , 0.9]$.

Now, we extend any $v\in W^{2,p}(\CC_{R,H}\cap \Omega)$ from
$\CC_{R,H}\cap \Omega$ to $\CC_{0.9R,0.9H}$ by
setting $v(\Bx) = w(\Bx)$ for any $\Bx \in \CC_{0.9R,0.9H} \cap
\overline{\Omega}^{c}$. According to (\ref{w_v_eq1}) and
(\ref{local_extension_ineq2}), we have that
$v \in W^{2,p}(\CC_{0.9R,0.9H})$ and there exists a positive constant $C_{1}$ such that
\begin{align*}
\Vert v\Vert_{W^{2,p}(\CC_{\sigma R,\sigma H}\cap\overline{\Omega}^{c})}
\leq C_{1} \Vert v\Vert_{W^{2,p}(\CC_{\sigma R, 0.2 H}\cap \Omega)},
\end{align*}
for any $0.2 \leq \sigma \leq 0.9$, where the constant $C_{1}$ is
independent of $\sigma \in [0.2 , 0.9]$. Theorefore, the claim
(\ref{claim1}) holds.
\end{proof}\vspace{10pt}

The following proposition can be viewed a extension of Theorem \ref{dauge}.
\begin{proposition}
\label{prop2}Let $\Omega\subseteq\mathbb{R}^3$ be a Lipschitz polyhedra. Then

$(1)$ for any $6/5\leq p<4/3$, there exits a positive constant $\delta>0$ such that if  $2R^{2}+H^{2} \leq \delta^{2} \leq 1$, there holds:
\begin{align}
\label{Dauge_ineq2}
\Vert v\Vert_{W^{2,p}(\Omega)} \leq 2 \tilde{C}_{p}
\Vert A:D^{2}v\Vert_{L^{p}(\Omega)}, \quad
\end{align}for any $v \in W^{2,p}(\Omega)\cap W_{0}^{1,p}(\Omega)$ with
$\mathrm{supp}v \subset \overline{\CC_{R, H}\cap \Omega}$ .

$(2)$ if $\Omega$ is also convex, then the conclusion of $(1)$ still holds for any $6/5\leq p\leq 2$.
\end{proposition}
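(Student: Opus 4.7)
The plan is to reduce the variable coefficient estimate (\ref{Dauge_ineq2}) to Dauge's estimate for the Laplacian (Theorem \ref{dauge}) via a standard coefficient-freezing and perturbation argument, using a linear change of variables to transform the frozen constant-coefficient operator into the Laplacian. Freeze the coefficient at the base point $\Bx_0 \in \partial\Omega$ from the local setup and set $A_0 := A(\Bx_0)$. By (\ref{assump_1}), $A_0$ is symmetric with $\lambda_0 I_3 \leq A_0 \leq \lambda_1 I_3$, so its positive symmetric square root $T := A_0^{1/2}$ satisfies bounds on $\|T\|$ and $\|T^{-1}\|$ depending only on $\lambda_0$ and $\lambda_1$. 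Define $\tilde\Omega := T^{-1}\Omega$ and, for $v \in W^{2,p}(\Omega)\cap W_{0}^{1,p}(\Omega)$, set $\tilde v(\By) := v(T\By)$. Since $T$ is a linear isomorphism, $\tilde\Omega$ is again a Lipschitz polyhedron, and it is convex whenever $\Omega$ is convex. A direct computation using $D^2_{\By}\tilde v = T^{\top}(D^2_{\Bx}v)\,T$ gives $A_0 : D^2_{\Bx} v(\Bx) = \operatorname{tr}(T^{\top}D^2_{\Bx}v\,T) = \Delta_{\By}\tilde v(\By)$ when $\Bx = T\By$, and the boundary condition transfers, so $\tilde v \in W^{2,p}(\tilde\Omega)\cap W_{0}^{1,p}(\tilde\Omega)$.

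Next, apply Theorem \ref{dauge} on $\tilde\Omega$ to obtain $\Vert\tilde v\Vert_{W^{2,p}(\tilde\Omega)} \leq \tilde C_p\Vert\Delta\tilde v\Vert_{L^p(\tilde\Omega)}$; in case $(1)$ this requires $6/5 \leq p < 4/3$, while in case $(2)$ the range $6/5 \leq p \leq 2$ is permitted because convexity of $\Omega$ is inherited by $\tilde\Omega$. Pulling the inequality back to $\Omega$ through the bounds on $T$ and $T^{-1}$ produces
\[
\Vert v\Vert_{W^{2,p}(\Omega)} \leq C_p^{\ast}\,\Vert A_0 : D^2 v\Vert_{L^p(\Omega)},
\]
where $C_p^{\ast}$ depends only on $\tilde C_p$, $\lambda_0$, and $\lambda_1$.

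To remove the constant-coefficient assumption, decompose $A_0 : D^2 v = A : D^2 v - (A - A_0) : D^2 v$. Since $A \in C^{0}(\overline{\Omega})$ is uniformly continuous with some modulus $\omega_A$, and since $\operatorname{supp} v \subset \overline{\CC_{R,H}\cap\Omega}$ lies in a ball of radius $\sqrt{2R^2 + H^2}\leq \delta$ around $\Bx_0$, we have $|A(\Bx) - A_0| \leq \omega_A(\delta)$ throughout the support of $v$. The triangle inequality then gives
\[
\Vert v\Vert_{W^{2,p}(\Omega)} \leq C_p^{\ast}\,\Vert A : D^2 v\Vert_{L^p(\Omega)} + C\,C_p^{\ast}\,\omega_A(\delta)\,\Vert v\Vert_{W^{2,p}(\Omega)},
\]
with an absolute constant $C$ coming from matrix norm equivalence in $\mathbb{R}^{3\times 3}$. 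Choosing $\delta > 0$ small enough that $C\,C_p^{\ast}\omega_A(\delta) \leq 1/2$ (possible since $\omega_A(\delta)\to 0$ as $\delta \to 0^+$) and absorbing the last term yields (\ref{Dauge_ineq2}) with constant $2\tilde C_p := 2C_p^{\ast}$.

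The main obstacle is the bookkeeping in the change-of-variables step: one must verify that $\tilde\Omega = T^{-1}\Omega$ is an admissible Lipschitz (respectively convex) polyhedron for Theorem \ref{dauge}, and track how the Dauge constant on $\tilde\Omega$ together with the norms of $T$ and $T^{-1}$ combine into a single constant $C_p^{\ast}$. Both the polyhedral structure and convexity are preserved by linear isomorphisms, and the eigenvalues of $T$ are uniformly bounded above and below in terms of $\lambda_0$ and $\lambda_1$, so the argument proceeds as sketched.
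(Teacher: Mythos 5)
Your proposal is correct and follows essentially the same route as the paper: freeze $A$ at $\Bx_0$, use a linear change of variables to turn the frozen operator into the Laplacian on a transformed Lipschitz (resp.\ convex) polyhedron, apply Theorem \ref{dauge}, pull back, and absorb the perturbation $(A-A(\Bx_0)):D^2v$ using continuity of $A$ and the smallness of the support governed by $2R^2+H^2\leq\delta^2$. The only cosmetic difference is that you take $T=A(\Bx_0)^{1/2}$ while the paper uses a general congruence $B^{\top}A(\Bx_0)B=I_3$; the substance of the argument is identical.
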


\begin{proof}We denote by $L_{0}$ the constant coefficient operator given by
\begin{align*}
L_{0} u = A(\Bx_{0})D^{2}u.
\end{align*}
Since $A(\Bx_{0})$ is symmetric and positive definite, there is an
invertible matrix $B \in \mathbb{R}^{3 \times 3}$ such that
\begin{align*}
B^{\top} A(\Bx_{0}) B = I_{3}.
\end{align*}
We define the affine mapping $G:\mathbb{R}^{3}\rightarrow
\mathbb{R}^{3}$ as
\begin{align*}
\Bx = G(\widehat{\Bx}) := B^{-1}\widehat{\Bx} + \Bx_{0}, \qquad
\forall \widehat{\Bx} \in \mathbb{R}^{3}.
\end{align*}
We denote by $\widehat{\Omega}$ the preimage of $\Omega$ via the above
affine mapping $G$. For any $v \in W^{2,p}(\Omega) \cap
W_{0}^{1,p}(\Omega)$, we define $\widehat{v}(\widehat{\Bx}) :=
v (G(\widehat{\Bx}))$ for any $\widehat{\Bx} \in \widehat{\Omega}$.
It's easy to see that
\begin{align*}
A(\Bx_{0}): D^{2}u (G(\Bx)) = I_{d}: D_{\widehat{\Bx}}^{2}
\widehat{u}(\widehat{\Bx}), \qquad \forall \widehat{\Bx}
\in \widehat{\Omega}.
\end{align*}
According to Theorem \ref{dauge}, for any $p$ satisfying: either (1) $\frac{6}{5}
\leq p < \frac{4}{3}$, or (2) $\frac{6}{5} \leq p \leq 2$ when $\Omega$ is also convex,
there is a positive constant $\widehat{C}_{p}$ depending only on $\Bx_0$, $\Omega$
and $p$ such that
\begin{align*}
\Vert D_{\widehat{\Bx}}^{2} \widehat{v}\Vert_{L^{p}(\widehat{\Omega})}
+\Vert D_{\widehat{\Bx}} \widehat{v}\Vert_{L^{p}(\widehat{\Omega})}
+ \Vert \widehat{v}\Vert_{L^{p}(\widehat{\Omega})}
\leq \widehat{C}_{p} \Vert I_{3}: D_{\widehat{\Bx}}^{2}
\widehat{v}\Vert_{L^{p}(\widehat{\Omega})}.
\end{align*}
Therefore, there is $\tilde{C}_{p} > 0$ such that
\begin{align}
\label{Dauge_ineq1}
\Vert v\Vert_{W^{2,p}(\Omega)} \leq \tilde{C}_{p}
\Vert L_{0} v\Vert_{L^{p}(\Omega)}, \qquad
\forall v \in W^{2,p}(\Omega) \cap W_{0}^{1,p}(\Omega)
\end{align}
where the constant $\tilde{C}_{p}$ depends only on $\Bx_{0}$, $\Omega$ and $p$.
Consequently, if $\text{supp}v \subset \overline{\CC_{R,H}\cap\Omega}$,
there holds
\begin{align*}
L_{0} v = (A(\Bx_{0}) - A): D^{2}v + A:D^{2}v,
\end{align*}
and by (\ref{Dauge_ineq1}), it infers
\begin{align*}
\Vert v\Vert_{W^{2,p}(\Omega)}
\leq \tilde{C}_{p} \left( \sup_{\CC_{R,H}\cap \Omega}
\vert A - A(\Bx_{0})\vert \Vert D^{2}v \Vert_{L^{p}(\Omega)}
+ \Vert A:D^{2}v\Vert_{L^{p}(\Omega)} \right).
\end{align*}
Since $A \in [C^{0}(\overline{\Omega})]^{3 \times 3}$, there exists
a positive number $0< \delta \leq 1$ such that
\begin{align*}
\vert A(\Bx) - A(\Bx_{0}) \vert \leq \dfrac{1}{2(\tilde{C}_{p}+1)},
\qquad \forall \Bx \in B_{\delta}(\Bx_{0}) \cap \Omega,
\end{align*}
and hence
\begin{align*}
\Vert v\Vert_{W^{2,p}(\Omega)} \leq 2 \tilde{C}_{p}
\Vert A:D^{2}v\Vert_{L^{p}(\Omega)}, \quad \forall
v \in W^{2,p}(\Omega)\cap W_{0}^{1,p}(\Omega),
\text{supp}v \subset \overline{\CC_{R, H}\cap \Omega},
\end{align*}
provided $2R^{2}+H^{2} \leq \delta^{2} \leq 1$.
\end{proof}\

Next, we finish the proof of Theorem \ref{main_theorem1}.

{\bf Proof of Theorem \ref{main_theorem1}:} For simplicity,  for any $t>0$, we denote $\CC_{tR, tH}$ by $\CC_{t}$.
 For any $ \sigma \in (0.2, 0.9)$, we  introduce a cutoff function
$\eta \in C_{0}^{2}(\CC_{1})$ satisfying
\begin{align*}
0 \leq \eta \leq 1 \ \text{on}\ \mathbb{R}^3,   \  \eta \equiv 1 \ \text{ in} \ \CC_{\sigma}, \ \eta = 0 \ \text{ in }\ \CC_{1} \backslash \CC_{\frac{0.9+\sigma}{2}} ,\\
 \ \  {4}{(0.9-\sigma)R}\vert D \eta\vert   +   (0.9-\sigma)^{2}R^{2}\vert D^{2} \eta\vert \leq 16.
\end{align*}Here we have used the assumption: $R\leq H.$

By Proposition \ref{prop2}  and taking $v = \eta u$ in (\ref{Dauge_ineq2}), there exists a small number $\delta>0$ such that when $2R^{2} + H^{2} \leq \delta^{2} \leq 1$, there holds:
\begin{align*}
& \Vert u\Vert_{W^{2,p}(\CC_{\sigma}\cap \Omega)}
\leq \Vert \eta u\Vert_{W^{2,p}(\Omega)} \\
\leq & 2\tilde{C}_{p} \Vert A:D^{2}(\eta u)\Vert_{L^{p}(\Omega)}\\
= & 2\tilde{C}_{p} \Vert \eta A:D^{2}u + 2 a^{ij}D_{i}\eta
D_{j}u + u A:D^{2}\eta \Vert_{L^{p}(\CC_{0.9}\cap \Omega)} \\
\leq & 32 \tilde{C}_{p} \big( \Vert A:D^{2}u\Vert_{L^{p}
(\CC_{0.9}\cap \Omega)} + {((0.9-\sigma)R)^{-1}}
\Vert D u\Vert_{L^{p}((\CC_{\frac{0.9+\sigma}{2}}
\cap\Omega) \backslash \CC_{\sigma})} \\
& \qquad \qquad
+ {(0.9-\sigma)^{-2}R^{-2}} \Vert u\Vert_{L^{p}(\CC_{0.9}
\cap\Omega)}\big)
\end{align*}  where $\tilde{C}_{p}$ is independent of $\sigma, R$ and $H$.
Therefore if $2R^{2} + H^{2} \leq \delta^{2} \leq 1$,
we have
\begin{align*}
& (0.9 - \sigma)^{2}R^{2}\Vert u\Vert_{W^{2,p}(\CC_{\sigma }
\cap \Omega)} \\
\leq & 32 \tilde{C}_{p}\big( R^{2}
\Vert A:D^{2}u\Vert_{L^{p}
(\CC_{0.9}\cap \Omega)} + (0.9 - \sigma)R
\Vert D u\Vert_{L^{p}((\CC_{\frac{0.9+\sigma}{2}}
\cap\Omega)
\backslash \CC_{\sigma})}
+ \Vert u\Vert_{L^{p}(\CC_{0.9}\cap\Omega)}\big).
\end{align*}
Since $u|_{\CC_{0.9}\cap \Omega} \in W^{2,p}
(\CC_{0.9}\cap \Omega)$, we extend $u|_{{\CC_{0.9}}\cap\Omega}$
from  ${{\CC_{0.9}}\cap\Omega}$ to $\CC_{0.9}$ by the extension satisfying (\ref{claim1}).
Thus there exists a constant $C_{0}$ such that
\begin{align*}
& (0.9 - \sigma)^{2} R^{2}\Vert u\Vert_{W^{2,p}(\CC_{\sigma })}
\leq C_{0} (0.9 - \sigma)^{2} R^{2} \Vert u\Vert_{W^{2,p}
(\CC_{\sigma}\cap \Omega)} \\
\leq & 32 C_{0} \tilde{C}_{p}\big(
R^{2} \Vert A:D^{2}u\Vert_{L^{p}
(\CC_{0.9}\cap \Omega)} + (0.9 - \sigma)R
\Vert D u\Vert_{L^{p}((\CC_{\frac{0.9+\sigma}{2}}
\cap\Omega) \backslash \CC_{\sigma})}
+ \Vert u\Vert_{L^{p}(\CC_{0.9}\cap\Omega)}\big) \\
\leq & 32 C_{0} \tilde{C}_{p}\big(
R^{2} \Vert A:D^{2}u\Vert_{L^{p}
(\CC_{0.9}\cap \Omega)} + (0.9 - \sigma)R
\Vert D u\Vert_{L^{p}(\CC_{\frac{0.9+\sigma}{2}}
\backslash \CC_{\sigma})}
+ \Vert u\Vert_{L^{p}(\CC_{0.9})}\big).
\end{align*}

By \cite[Theorem~$7.28$]{GT01} with standard scaling argument, for any $\epsilon>0$, there holds:
\begin{align*}
& (0.9 - \sigma)R
\Vert D u\Vert_{L^{p}(\CC_{\frac{1+\sigma}{2}}
\setminus \CC_{\sigma })} \\
\leq & \epsilon (0.9 - \sigma)^{2}R^{2}
\Vert D^{2} u\Vert_{L^{p}(\CC_{\frac{1+\sigma}{2}}
\backslash \CC_{\sigma})}
+ {C_{2}}{\epsilon}^{-1} \Vert u\Vert_{L^{p}
(\CC_{\frac{1+\sigma}{2}}
\backslash \CC_{\sigma})},
\end{align*}where $C_{2}$ is independent of $\epsilon$ and $\sigma$.
 Therefore, for any $\epsilon > 0$, we have
\begin{align*}
& (0.9 - \sigma)^{2}R^{2}\Vert u\Vert_{W^{2,p}
(\CC_{\sigma})}\\
\leq & 32 C_{0} \tilde{C}_{p} \big( R^{2}
\Vert A:D^{2}u\Vert_{L^{p}(\CC_{0.9}\cap \Omega)}
+ \epsilon (0.9 - \sigma)^{2}R^{2}
\Vert D^{2} u\Vert_{L^{p}(\CC_{\frac{0.9+\sigma}{2}})} \\
& \qquad \qquad \qquad + (C_{2}+1){\epsilon}^{-1}
\Vert u\Vert_{L^{p}(\CC_{0.9})}\big).
\end{align*}

Introducing the weight semi-norms
\begin{align*}
\Phi_{k} = \sup_{0.2 < \sigma < 0.9} (0.9-\sigma)^{k}R^{k}
\Vert D^{k}u\Vert_{L^{p}(\CC_{\sigma})},\quad
k = 0, 1, 2.
\end{align*}
Therefore we obtain that for any $0.2 < \sigma < 0.9$ and
any $\epsilon >0$,
\begin{align*}
(0.9 - \sigma)^{2}R^{2}\Vert u\Vert_{W^{2,p}(\CC_{\sigma })}
\leq 32 C_{0}\tilde{C}_{p} \left( R^{2}\Vert A:D^{2}u\Vert_{L^{p}
(\CC_{0.9}\cap \Omega)} + \epsilon\Phi_{2} +
\frac{C_{2}+1}{\epsilon}\Phi_{0}\right).
\end{align*}
Since the above inequality holds for any $0.2 < \sigma < 0.9$, we have
that for any $\epsilon > 0$,
\begin{align*}
\Phi_{2} \leq 32 C_{0} \tilde{C}_{p} \left(
R^{2}\Vert A:D^{2}u\Vert_{L^{p}
(\CC_{0.9}\cap \Omega)} + \epsilon\Phi_{2}
+ \frac{C_{2} + 1}{\epsilon} \Phi_{0} \right)
\end{align*}
provided $2R^{2} + H^{2} \leq \delta^{2} \leq 1$.
By taking $\epsilon>0$ small enough
in the latest inequality above, we obtain
\begin{align}
\label{energy_ineq1}
\Phi_{2} \leq 64 C_{0}\tilde{C}_{p}\left(
R^{2}\Vert A:D^{2}u\Vert_{L^{p}
(\CC_{0.9}\cap \Omega)}
+ C_{3}^{\prime}\Vert u\Vert_{L^{p}(\CC_{0.9})}\right)
\end{align}
provided $2R^{2}+H^{2} \leq \delta^{2} \leq 1$.

 Therefore by (\ref{energy_ineq1}),  if $2R^{2}+H^{2} \leq \delta^{2}$,  there holds:
\begin{align}
\label{energy_ineq2}
\nonumber & (0.9 - \sigma)^{2}R^{2}\Vert D^{2}u\Vert_{L^{p}
(\CC_{\sigma} \cap \Omega)} \\
\nonumber
 \leq & 32 C_{0} \tilde{C}_{p}
 \left(  R^{2}\Vert A: D^{2}u\Vert_{L^{p}
(\CC_{0.9}\cap \Omega)}
+ C_{3}^{\prime}\Vert u\Vert_{L^{p}(\CC_{0.9})} \right) \\
\leq& C \left(  R^{2}\Vert A:D^{2}u\Vert_{L^{p}
(\Omega)}
+ C_{3}\Vert u\Vert_{L^{p}(\Omega)} \right)
\end{align}
for any $0 < \sigma < 0.9$.

The proof can be completed by taking $\sigma = \frac{1}{2}$ in
(\ref{energy_ineq2}) and covering $\partial\Omega$ with a
finite number of such neighbourhoods.  \hfill $\square$\par
\vspace{10pt}

Finally, we would like to conclude the proof of Corollary \ref{cor1}, which is similar that of Theorem 9.17 in Gilbarg-Trudinger \cite{GT01}. For completeness, we outline the main idea of proof here.
\vspace{5pt}

\textbf{Proof of Corollary \ref{cor1}}: We argue by contradiction. For simplicity, we write $Lu=A:D^2u$. If the conclusion does not hold true, then there must exist a sequence $\{v_m\}\subset W^{2, p}(\Omega)\cap W^{1,p}_0(\Omega) $ such that $\|v_m\|_{L^p (\Omega)}=1$ for any $m$ and $\|Lv_m\|_{L^p(\Omega)} \rightarrow 0$ as $m\rightarrow \infty$. From Theorem \ref{main_theorem1},$\{\|v_m\|_{W^{2,p }(\Omega)} \}$ is uniformly bounded. Then  by compact embedding of $W_0^{1,p}\rightarrow L^p$, there is a subsequence, still denoted by $\{v_m\}$, converging weakly to $v\in W^{2, p}(\Omega)\cap W^{1,p}_0(\Omega)$ with $$\|v\|_{L^p (\Omega)}=1\ \ \ \  \text{and}\  Lv=0 .$$
Since $p>3/2$, we see that $v\in C^0(\overline{\Omega})$ by Sobolev embedding theorem.  Then by interior $W^{2, p}$ estimate (Theorem 9.11 in \cite{GT01}), we  obtain $v\in W_{\mathrm{loc}}^{2, 3}$. Hence by Alexandrov maximum principle and the fact: $ v=0 \ \text{on}\ \partial\Omega$, $v=0$ in $\Omega$, which contradicts with the condition: $\|v\|_{L^p (\Omega)}=1$.

\bibliographystyle{plain}

\end{document}